% Contribution to Herman te Riele's retirement "liber amicorum"
% By Juan Arias de Reyna, Richard P. Brent and Jan van de Lune
% This version corrects the published version (arXiv:1112.4910v1).
% 
\documentclass[12pt,oneside]{amsart}

      \usepackage{amssymb}
      \usepackage[english]{babel}
      \usepackage{graphicx}
      \usepackage[applemac]{inputenc}
      \usepackage{float}
      \usepackage{wrapfig}
      \usepackage[a4paper=true,%
      breaklinks=true,%
      colorlinks=true,%
      linkcolor={blue},%
      citecolor={red},%
      pdftitle={A note on the real part of Riemann's zeta function}%
      pdfauthor={Juan Arias de Reyna, Richard P. Brent, Jan van de Lune},%
      pdfkeywords={snapshot},%
      %pdfsubject={},%
      pdfstartview=FitH,%%%pdfpagemode=None
       ]{hyperref}

      \theoremstyle{plain}
      \newtheorem{theorem}{Theorem}[section]
      \newtheorem{lemma}[theorem]{Lemma}

      \theoremstyle{definition}

      \theoremstyle{remark}

      \newcommand{\C}{{\mathbb C}}
      \newcommand{\R}{{\mathbb R}}

      \newfont{\cmbsy}{cmbsy10}
      \newfont{\cmmib}{cmmib10}

      \def\Re{\mathrm{Re\,}}

      \makeatletter
      \def\@setcopyright{}
      \def\serieslogo@{}
      \makeatother

\begin{document}

   \author{Juan Arias de Reyna}
   \address{Universidad de Sevilla, Facultad de Matem\'aticas,
	    Apdo.\ 41080-Sevilla, Spain}
   \email{arias@us.es}
   
   \author{Richard P. Brent}

   \address{Mathematical Sciences Institute, 
   Australian National University, Canberra, ACT 0200, Australia}
   \email{abl@rpbrent.com}

   \author{Jan van de Lune }
   \address{\noindent Langebuorren 49, 9074 CH Hallum, The Netherlands 
   \newline(Formerly at CWI, Amsterdam ) }
   \email{j.vandelune@hccnet.nl}

    % title

   \title[On the real part of the Riemann zeta-function]
   {A note on the real part of the Riemann zeta-function}

   % Note that the short title for running heads goes in square
   % brackets.  This is optional.  The long title goes in curly
   % braces.  In the long title, line breaks are indicated by \\.

   \begin{abstract}
   We consider the real part $\Re\zeta(s)$ of the Riemann zeta-function
   $\zeta(s)$ in the half-plane $\Re(s) \ge 1$. We show how to compute
   accurately the constant $\sigma_0 \approx 1.19$ 
   which is defined to be the supremum of $\sigma$ such
   that $\Re\zeta(\sigma+it)$ can be negative (or zero)
   for some real $t$.
   We also consider intervals where $\Re\zeta(1+it) \le 0$ 
   and show that they are rare. The first occurs for $t \approx 682112.9$,
   and has length $\approx 0.05$.  
   We list the first $50$ such intervals.
   \end{abstract}

\vspace*{-10pt}

   % AMS subject classifications (used in AMS journals)
   %\subjclass{Primary 00A30; Secondary 00A22, 03E20}

   % AMS keywords (used in AMS journals)
   %\keywords{Bernoulli numbers, integers.}

   % acknowledge support, etc
   %\thanks{}

   % dedication
   \dedicatory{Dedicated to Herman J.~J.~te Riele on the occasion
     of his retirement\\ from the CWI in January 2012}

   % today's date, or fill in whatever date you prefer
   %\date{\today}

   \maketitle

\vspace*{-10pt}

\section{Introduction}

\thispagestyle{empty}

   In this note we consider the real part of the Riemann zeta-function
   $\zeta(s)$ in the half-plane $H = \{s \in \C\,|\,\Re(s) \ge 1\}$. 
   As usual, we write
   $s = \sigma + it$, so $\Re(s) = \sigma \ge 1$. We are mainly interested
   in the regions where $\Re\zeta(s) \le 0$.
   Since $\lim_{\sigma\uparrow\infty}\zeta(\sigma+it)=1$ 
   (uniformly in $t$), $\Re\zeta(\sigma+it)$
   cannot be zero for arbitrarily large $\sigma>1$.
   We define \[\sigma_0 := \sup \{\sigma \in \R\,|\, (\exists t \in \R)\,
		\Re\zeta(\sigma+it) = 0\}.\]

   Thus, $\Re\zeta(s) > 0$ if $\sigma > \sigma_0$.
   In van de Lune \cite{L} it was shown that $\sigma_0$
   is the (unique) positive real root of the equation
   \begin{displaymath}
   \sum_p\arcsin\Bigl(\frac{1}{p^\sigma}\Bigr) = \frac{\pi}{2}\,,
   \end{displaymath}
   where $p$ runs through the primes (we adopt this convention throughout).
   In \cite{L} it was also shown that $\sigma_0>1.192$ and that
   $\Re\zeta(\sigma_0 + it)$ never vanishes. 

   The main aim of this note is to show how $\sigma_0$ can be computed to 
   arbitrarily high precision by an efficient algorithm. 
   We also mention some results on the behaviour of
   $\Re\zeta(\sigma+it)$ for $1 \le \sigma < \sigma_0$, and in particular
   on the line $\sigma = 1$. 

\section{Accurate computation of the constant $\sigma_0$} \label{sec:theory}

   In this section we assume that $\sigma \ge \sigma_1 > 1$, where 
   $\sigma_1$ is a suitable constant (e.g. $1.1$).
   We show how the constant $\sigma_0$ can be computed within a given
   error bound. There are three main steps.

   \begin{enumerate}
   \item Give an algorithm to evaluate the 
   {\em prime zeta-function} \cite{Froberg}
	\[P(\sigma) = \sum_p p^{-\sigma},\] for real $\sigma > 1$.
   \item Using step 1, 
   give an algorithm to evaluate the function $f(\sigma)$ defined by
   \begin{displaymath}
   f(\sigma) = \sum_p\arcsin\Bigl(\frac{1}{p^\sigma}\Bigr) - \frac{\pi}{2}.
   \end{displaymath}
   \item Use a suitable zero-finding algorithm to locate a zero of $f(\sigma)$
   in a (sufficiently small) interval where $f(\sigma)$ changes sign,
   for example $[1.1, 1.2]$.
   \end{enumerate}

   Step 1 is easy.  From the Euler product for $\zeta(\sigma)$ and M\"obius 
   inversion, we have a formula essentially known to Euler \cite[1748]{Euler}:
   % Not clear exactly what Euler knew, but "essentially" should cover it.
   % See email from Juan 20111007
   \begin{equation}
   \label{eq:P}
   P(\sigma) = \sum_{r=1}^\infty\frac{\mu(r)}{r}\log\zeta(r\sigma),
   \end{equation}
   which is valid for $\sigma > 1$
   (see Titchmarsh \cite[eqn.\ (1.6.1)]{T}).
   The series converges rapidly in view of the following Lemma.
   \begin{lemma}	\label{lemma:P}
   For $\sigma \ge 2$, $0 < \log \zeta(\sigma) < 3/2^{\sigma}$
   and $0 < P(\sigma) < 3/2^{\sigma}$.
   \end{lemma}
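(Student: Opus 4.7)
The plan is to reduce both upper bounds to a single one-variable estimate, namely $\zeta(\sigma)-1<3/2^\sigma$ for $\sigma\ge 2$, which is then handled by a monotonicity argument.

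Positivity is immediate: on $[2,\infty)$ one has $\zeta(\sigma)=1+2^{-\sigma}+3^{-\sigma}+\cdots>1$, so $\log\zeta(\sigma)>0$; and $P(\sigma)\ge 2^{-\sigma}>0$. For the upper bounds, first note that the primes form a subset of the integers $\ge 2$, whence
\[
P(\sigma)=\sum_p p^{-\sigma}\le \sum_{n\ge 2}n^{-\sigma}=\zeta(\sigma)-1.
\]
Separately, the elementary inequality $\log(1+x)<x$ for $x>0$, applied with $x=\zeta(\sigma)-1$, gives $\log\zeta(\sigma)<\zeta(\sigma)-1$. So both desired upper bounds follow as soon as one proves $\zeta(\sigma)-1<3/2^\sigma$ for $\sigma\ge 2$.

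To establish that inequality I would rewrite
\[
2^\sigma\bigl(\zeta(\sigma)-1\bigr)=\sum_{n\ge 2}\Bigl(\frac{2}{n}\Bigr)^{\sigma}.
\]
The $n=2$ term equals $1$ independently of $\sigma$; every term with $n\ge 3$ is a strictly decreasing function of $\sigma$, since $0<2/n<1$. Hence the whole sum is a decreasing function of $\sigma$ on $[2,\infty)$ and attains its maximum at $\sigma=2$, where it equals $4\bigl(\zeta(2)-1\bigr)=2\pi^2/3-4$. The numerical bound $\pi^2<21/2$ then gives $2\pi^2/3-4<3$, which closes the argument.

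The only potential obstacle is the crude numerical check $2\pi^2/3-4<3$, but this is comfortable (the left side is about $2.58$), so no extra care is needed. The term-by-term monotonicity step is justified by uniform convergence of the series on $[2,\infty)$ (dominated by $\sum n^{-2}$), allowing termwise differentiation if one prefers a fully rigorous derivation.
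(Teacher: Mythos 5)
Your proof is correct, and it takes a genuinely different route to the key estimate $\zeta(\sigma)-1<3/2^\sigma$. The paper bounds the tail directly by an integral comparison,
\[
\zeta(\sigma)-1<2^{-\sigma}+3^{-\sigma}+\int_3^\infty x^{-\sigma}\,{\rm d}x
=2^{-\sigma}+3^{-\sigma}+\frac{3^{1-\sigma}}{\sigma-1},
\]
and then checks that this is $<3/2^\sigma$. You instead observe that $2^\sigma(\zeta(\sigma)-1)=\sum_{n\ge2}(2/n)^\sigma$ is a decreasing function of $\sigma$ on $[2,\infty)$ (constant $n=2$ term plus strictly decreasing terms for $n\ge3$), so its supremum is attained at $\sigma=2$, where it equals $4(\zeta(2)-1)=2\pi^2/3-4\approx2.58<3$. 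Your version reduces everything to one clean numerical check and makes the structural reason for the bound transparent (the whole expression is monotone), at the cost of invoking the closed form $\zeta(2)=\pi^2/6$; the paper's version avoids that special value and uses only the elementary integral tail bound, which perhaps fits better in a context where no special values of $\zeta$ are assumed. The surrounding reductions ($\log\zeta(\sigma)<\zeta(\sigma)-1$ via $\log(1+x)<x$, and $P(\sigma)\le\zeta(\sigma)-1$ since primes are a subset of integers $\ge2$) are identical in both arguments.
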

   \begin{proof}
   For $\sigma \ge 2$, we have
   \[ 0 < \zeta(\sigma) - 1 < 2^{-\sigma} + 3^{-\sigma}
	+ \int_3^\infty x^{-\sigma}{\rm d}x
	= 2^{-\sigma} + 3^{-\sigma} + \frac{3^{1-\sigma}}{\sigma-1}
	< 3/2^{\sigma}, \]
   so
   \[ 0 < \log\zeta(\sigma) < \zeta(\sigma) - 1 < 3/2^{\sigma}.\]
   The upper bound on $P(\sigma)$ follows similarly, using
   $P(\sigma) < \zeta(\sigma) - 1$.
   \end{proof}
   Using (\ref{eq:P}) and Lemma~\ref{lemma:P}, we have
   \[P(\sigma) = \log\zeta(\sigma) +
	\sum_{r=2}^\infty\frac{\mu(r)}{r}\log\zeta(r\sigma),\]
   where the $r$-th term in the sum is bounded in absolute value
   by $3/2^{r\sigma+1}$. Thus, we can evaluate $P(\sigma)$ accurately,
   for given $\sigma > 1$, using any good algorithm for the evaluation of
   $\zeta(\sigma)$, for example Euler-Maclaurin summation.
   If (\ref{eq:P}) is used to compute $P(\sigma)$, $P(3\sigma)$,
   $P(5\sigma), \ldots,$ then we should take care to compute the relevant
   terms $\log\zeta(r\sigma)$ only once.

   For step 2, we observe that the $\arcsin$ series defining $f(\sigma)$
   converges slowly and irregularly, since it is a sum over primes which to
   first order behaves like $\sum_p p^{-\sigma}$.  The well-known ``trick''
   is to express $f(\sigma)$ as a double series and reverse the order of
   summation, obtaining an expression which is mathematically equivalent but
   computationally far superior.  For some similar examples, see 
   Wrench \cite[1961]{Wrench}.
 
   For $|x|<1$ we have 
   \begin{displaymath}
   \arcsin(x)=\sum_{k=0}^\infty c_k x^{2k+1},
   \end{displaymath}
   where 
   \begin{displaymath}
   c_k=\frac{1\cdot3\cdot5\cdots(2k-1)}{2\cdot 4\cdot6\cdots(2k)}
   \frac{1}{2k+1}=\frac{(2k)!}{(2^kk!)^2(2k+1)}\qquad\text{for $k\ge0$}.
   \end{displaymath}
   Note that all $c_k$ are positive so that $f(\sigma)$ is strictly convex.
   It is also clear that $f(\sigma)$ is strictly
   decreasing for $\sigma>1$. 
   From the expression for $c_k$, we see that, for $k \ge 1$,
   \begin{equation}
   c_k \le \frac{1}{2(2k+1)}\,.			\label{eq:cineq}
   \end{equation} 

   For $\sigma > 1$ it is easy to justify interchanging the
   order of summation in
   \begin{equation*}
   f(\sigma)=\sum_p\sum_{k=0}^\infty c_k\Bigl(\frac{1}{p^\sigma}\Bigr)^{2k+1}
	- \; \frac{\pi}{2}\,,
   \end{equation*}
obtaining
   \begin{equation}		\label{eq:fseries}
   f(\sigma) =\sum_{k=0}^\infty c_k\sum_p\frac{1}{p^{(2k+1)\sigma}}
	- \frac{\pi}{2}
   =\sum_{k=0}^\infty c_k P\bigl((2k+1)\sigma\bigr)
	- \frac{\pi}{2}\,.
 %  &=&\sum_{k=0}^K c_k P\bigl((2k+1)\sigma\bigr)+
 %  \sum_{k=K+1}^\infty c_k P\bigl((2k+1)\sigma\bigr)
 % 	- \frac{\pi}{2}
   \end{equation}

   From Lemma~\ref{lemma:P} and the inequality (\ref{eq:cineq}),
   we see that
   \[
   0 < 
   \sum_{k=K+1}^\infty c_k P\bigl((2k+1)\sigma\bigr) < 2^{-(2K+3)\sigma},
   \]
   so it is easy to determine $K$ such that we can truncate the series
   in (\ref{eq:fseries}) to a finite sum over $k \le K$ with a rigorous
   error bound.

   If desired, we can substitute (\ref{eq:P}) into (\ref{eq:fseries})
   and interchange the order of summation, obtaining\footnote{We thank
Charles Voas for pointing out an error in equation~\eqref{eq:fseries2}
as stated in earlier versions of this paper. Fortunately, this error 
did not affect the computational results, 
which were obtained using~\eqref{eq:fseries}.}
   \begin{equation}	\label{eq:fseries2}
   f(\sigma) = \sum_{j=1}^\infty d_j \log \zeta(j\sigma)
     - \frac{\pi}{2},
   \end{equation}
   where
   \[
   d_j = \sum_{k\ge 0,\, r > 0,\, (2k+1)r = j}
		\frac{c_k\mu(r)}{r}\,.
   \]
   From the inequality $c_k \le 1/(2k+1)$ (valid for $k \ge 0$),
   it follows that $|d_j| \le 1$.
   Using Lemma \ref{lemma:P}, we can determine where to safely
   truncate the series (\ref{eq:fseries2}).

   For step 3, we can use a zero-finding algorithm which needs only
   function (not derivative) evaluations, and gives a guaranteed bound
   on the final result.  For example, the method of bisection could be
   used, but would be slow, taking about $\log_2(1/\varepsilon)$
   function evaluations to obtain a solution with error bounded
   by $\varepsilon$.  In the secant method, a sequence $(x_n)$, converging
   to a zero of $f$ under suitable conditions, is obtained by
   computing the approximation
   $x_{n}$ by linear interpolation using the two
   points
   $(x_{n-1}, f(x_{n-1}))$ and $(x_{n-2}, f(x_{n-2}))$. 
   It converges with order
   $(1+\sqrt{5})/2 \approx 1.618$, but does not always give a
   guaranteed bound on the error.  A combination of bisection
   and linear interpolation, as in the algorithms
   of Dekker~\cite{Dekker} or Brent~\cite{Br},
   can give convergence about as fast as the secant method, but 
   with the final result bracketed in a short interval where the function $f$
   changes sign.

   \section{Computational results}

   The second and third authors independently wrote programs implementing
   the ideas of \S\ref{sec:theory}, using Magma in one case and
   Mathematica 4 and 8 in the other case. The programs used different
   strategies to obtain a final interval where $f$ changes sign (in one
   case taking advantage of the strict convexity of $f$). 
   The output of the programs agrees to at least 500D.
   We give here the correctly rounded result to 100D:

   \begin{displaymath}\begin{split}
   \sigma_0\approx
   1.&19234\,73371\,86193\,20289\,75044\,27425\,59788\,34011\,19230\,
   83799\\[-4pt]
   &94301\,37194\,92990\,52458\,64848\,30139\,24084\,99863\,83788\,36244\,.\\
   \end{split}\end{displaymath}
   Programs and higher precision values are available from the authors.
   % See magmarpb/reyna5

   \section{The distribution of $\Re\zeta(\sigma+it)$ for $\sigma \ge 1$}

   Assuming that the limit exists, we define
   \[
   d(\sigma) = \lim_{T \to +\infty}
		\frac{1}{T}\,m\{t\in [0,T]\,|\, \Re\zeta(\sigma + it) < 0\},
   \]
   where $m$ denotes Lebesgue measure.
   Informally, $d(\sigma)$ is the probability that $\zeta(s)$ has
   negative real part on a given vertical line $\Re(s) = \sigma$.

   The results of Section \ref{sec:theory} show that
   $d(\sigma) = 0$ for $\sigma \ge \sigma_0 \approx 1.19$.
   Here we briefly discuss the region $1 \le \sigma < \sigma_0$.

   At least for those values of $t$ that are accessible to computation, 
   $\Re\zeta(\sigma+it)$ is ``usually'' positive for $\sigma \ge 1$.
   The function
   $d(\sigma)$ is conjectured to be continuous and monotonic decreasing
   from a positive value at $\sigma = 1$ to zero at $\sigma = \sigma_0$.
   Even on the line $\sigma = 1$, $\Re\zeta(\sigma+it)$ is usually positive
   \cite{Milioto}.
   We can prove that $d(1) < 1/4$, but a Monte Carlo computation
   suggests that the true value is much smaller. Based on $5 \times 10^{11}$
   pseudo-random trials, we estimate $d(1) = (3.80 \pm 0.01) \times 10^{-7}.$
   Similarly, we estimate
   $d(1.01) = (1.10\pm 0.01) \times 10^{-7}$ and
   $d(1.02) \approx (2.66\pm 0.04) \times 10^{-8}$, so it can be seen that 
   $d(\sigma)$ decreases rapidly as we move to the right of $\sigma = 1$.

   Although $\zeta(s)$ has a simple pole at $s = 1$, the Laurent series
  \[\zeta(s) = \frac{1}{s-1} + \gamma + O(|s-1|)\]
   shows that $\Re\zeta(1+it)$ % is bounded for small (real) $t$, and 
   has a positive limit $\gamma = 0.577\cdots$ (Euler's constant) 
   as $t \to 0$.

   On any fixed vertical line $\sigma > 1$, both
   $\zeta(\sigma+it)$ and $1/\zeta(\sigma+it)$ are bounded,
   in fact
   ${\zeta(2\sigma)}/{\zeta(\sigma)} < |\zeta(\sigma + it)|
	\le \zeta(\sigma)$.
   However, the situation is different on the line $\sigma = 1$,
   as both $\zeta(1+it)$ and $1/\zeta(1+it)$ are unbounded. Their 
   true order of growth is unknown.
   It follows from Titchmarsh~\cite[Theorem 11.9]{T} and the continuity
   of $\Re\zeta(1+it)$ that
   $\Re\zeta(1+it)$ attains all real values. 
   Nevertheless, the ``usual''
   values are quite small. As a special case of
   \cite[Theorem 7.2]{T} we have the mean value theorem
   \[\lim_{T \to \infty} \frac{1}{T}\,\int_1^T |\zeta(1+it)|^2\, {\rm d}t
     = \zeta(2) = \frac{\pi^2}{6}\,.\] 
   Using ideas as in the proof of \cite[Theorem 7.2]{T},
   % See Juan's 110927-Question.pdf 
   we can prove that
   \[\lim_{T \to \infty} \frac{1}{T}\,\int_0^T \Re\zeta(1+it)\, {\rm d}t
     = 1\,.\] 
   Thus, informally, we can say that the typical value of $\Re\zeta(1+it)$
   is close to~$1$. The values have a distribution with mean $1$
   and variance $\pi^2/6 - 1 \approx 0.645$.

   In~\cite[Table 1]{L}, van de Lune gave a list of values 
   of $t > 0$ such that
   $\Re\zeta(1+it) < 0$ and is (approximately) a local minimum. 
   The list was not claimed to be exhaustive.  The smallest $t$ listed
   was $t = 682112.92$ with $\Re\zeta(1+it) \approx -0.003$. We have shown,
   using the ``maximum slope principle'' \cite{LR}, 
   that this is very close to the smallest $t$ for which 
   $\Re\zeta(1+it) \le 0$.  More precisely,
   $\Re\zeta(1+it) > 0$ for $0 < t < 682112.8913$,
   and there is a local minimum of $-.0027652$ at $t \approx 682112.9169$.
   In applying the maximum slope principle we used the bound
   % See Juan's Inequality.pdf
   \[
    \left|\frac{\rm d}{{\rm d}t}\arg\zeta(1+it)\right| =
    \left|\Re\frac{\zeta'(1+it)}{\zeta(1+it)}\right| \le
    \frac{3}{4}\,\log(t^2+4) + 7 \;\; {\rm for} \;\; t \ge 10. \]

   \begin{table}[tp] % generated using awk from intervals-01-50.txt
   \centering
   \caption{First $50$ negative local minima of $\Re\zeta(1+it)$}
   \label{table:one}
   \begin{tabular}{|c|c|c||c|c|c|}
   \hline
   $t$ & $\Re\zeta$ & length &
   $t$ & $\Re\zeta$ & length \\
   \hline
    $\;\;$
   $  682112.9169$ & $-0.0028$ & $0.0529$&
   $ 8350473.4853$ & $-0.0019$ & $0.0451$\\
   $ 1267065.1710$ & $-0.0040$ & $0.0655$&
   $ 8366684.0439$ & $-0.0197$ & $0.1322$\\
   $ 1466782.0667$ & $-0.0013$ & $0.0391$&
   $ 8452317.9526$ & $-0.0090$ & $0.0900$\\
   $ 1858650.0915$ & $-0.0282$ & $0.1686$&
   $ 8967566.5926$ & $-0.0148$ & $0.1336$\\
   $ 2023654.7671$ & $-0.0221$ & $0.1389$&
   $ 9960968.8748$ & $-0.0184$ & $0.1373$\\
   $ 2064996.2141$ & $-0.0117$ & $0.1076$&
   $11231380.7309$ & $-0.0099$ & $0.1042$\\
   $ 2195056.7909$ & $-0.0755$ & $0.2718$&
   $11236680.3350$ & $-0.0262$ & $0.1595$\\
   $ 2202620.3296$ & $-0.0111$ & $0.1159$&
   $11781932.0257$ & $-0.0170$ & $0.1288$\\
   $ 2530662.6360$ & $-0.0072$ & $0.0865$&
   $11884021.9776$ & $-0.0035$ & $0.0564$\\
   $ 3259774.5293$ & $-0.0471$ & $0.2098$&
   $12045289.3337$ & $-0.0644$ & $0.2498$\\
   $ 3548283.4160$ & $-0.0189$ & $0.1459$&
   $12276788.1573$ & $-0.0182$ & $0.1476$\\
   $ 4052438.9330$ & $-0.0023$ & $0.0474$&
   $12546625.7916$ & $-0.0455$ & $0.2031$\\
   $ 4197235.0783$ & $-0.0331$ & $0.1977$&
   $12781127.5748$ & $-0.0102$ & $0.0964$\\
   $ 5410820.7150$ & $-0.0008$ & $0.0307$&
   $13598773.5889$ & $-0.0543$ & $0.2317$\\
   $ 6027913.8513$ & $-0.0181$ & $0.1325$&
   $13786262.5457$ & $-0.0826$ & $0.2635$\\
   $ 6164063.0008$ & $-0.0263$ & $0.1603$&
   $13922411.7750$ & $-0.0222$ & $0.1418$\\
   $ 6238849.4877$ & $-0.0071$ & $0.0827$&
   $14190358.4974$ & $-0.0632$ & $0.2214$\\
   $ 6265907.4688$ & $-0.0030$ & $0.0522$&
   $14391623.0217$ & $-0.0016$ & $0.0437$\\
   $ 6421627.2235$ & $-0.0241$ & $0.1651$&
   $14788310.5330$ & $-0.0149$ & $0.1132$\\
   $ 7338152.4379$ & $-0.0043$ & $0.0656$&
   $14856540.3430$ & $-0.0220$ & $0.1442$\\
   $ 7469838.9709$ & $-0.0009$ & $0.0305$&
   $15173904.7533$ & $-0.0041$ & $0.0800$\\
   $ 7766995.0303$ & $-0.0742$ & $0.2840$&
   $15321273.7219$ & $-0.0131$ & $0.1181$\\
   $ 7774558.3985$ & $-0.0672$ & $0.2705$&
   $16083163.0244$ & $-0.0098$ & $0.1038$\\
   $ 7985493.9836$ & $-0.0324$ & $0.1728$&
   $16503899.3235$ & $-0.0060$ & $0.0680$\\
   $ 8299958.2327$ & $-0.0022$ & $0.0432$&
   $16656258.8346$ & $-0.0155$ & $0.1329$\\
   \hline
   \end{tabular}
   \end{table}

   Table~\ref{table:one} lists the first $50$ local
   minima of $\Re\zeta(1+it)$ for which\linebreak 
   $t > 0$ and $\Re\zeta(1+it) \le 0$ (no minima are exactly zero).
   The values  
   in the table are rounded to $4$ decimal places.
   The columns headed ``length'' give the lengths of the intervals 
   containing $t$
   in which $\Re\zeta$ is negative. To $8$ decimal places, the first interval,
   of length $0.05291225$, is $(682112.89133824, 682112.94425049)$. 
   The sum of the lengths of the first $50$ intervals is 
   $6.48390168$, % 6.4839016815353441697 according to Juan
   giving an estimate $d(1) \approx 3.85\times 10^{-7}$.
   This is close to our Monte Carlo 
   estimate $d(1) \approx 3.80 \times 10^{-7}$.

   In this brief note we refrain from commenting on the region\linebreak
   $\sigma \in [1/2, 1)$, but refer the interested reader to the literature, 
   such as Bohr and Jessen \cite{BJ}, Titchmarsh \cite[\S11.13]{T},
   Tsang \cite{Tsang}, Joyner \cite{Joyner}, 
   Laurin\v{c}ikas \cite{Laurincikas}, Steuding \cite{Steuding}
   and K\"uhn \cite{Kuhn}.

\end{document}